\newtheorem{theorem}{Theorem}
\newtheorem{lemma}[theorem]{Lemma}
\newcommand{\NN}{\mathbb N}
\newcommand{\Gnp}{\mathbb{G}(n,p)}
\renewcommand{\epsilon}{\varepsilon}
\begin{document}
\title{Vertex-coloring graphs with 4-edge-weightings}
\author{Ralph Keusch \\ \small{ralphkeusch@gmail.com}}
\date{\today}
\maketitle
\begin{abstract}
An edge-weighting of a graph is called vertex-coloring if the weighted degrees yield a proper vertex coloring of the graph. It is conjectured that for every graph without isolated edge, a vertex-coloring edge-weighting with the set $\{1,2,3\}$ exists. In this note, we show that the statement is true for the weight set $\{1,2,3,4\}$.
\end{abstract}

\section{Introduction}
Let $G=(V,E)$ be a simple graph. A $k$-edge-weighting is a function $\omega: E \rightarrow \{1, \ldots, k\}$. Given an edge-weighting $\omega$, for a vertex $v \in V$, we denote by $s_{\omega}(v)=\sum_{w \in N(v)} \omega(\{v,w\})$ its weighted degree. We say that $\omega$ is vertex-coloring if for each edge $e=\{u,v\} \in E$, it holds $s_{\omega}(u) \neq s_{\omega}(v)$. Obviously, if $G$ contains an isolated edge, no edge-weighting is vertex-coloring. Otherwise, we aim to find a vertex-coloring $k$-edge-weighting with the smallest possible integer $k$. In 2004, Karo\'{n}ski, {\L}uczak, and Thomason conjectured that for each graph without connected component isomorphic to $K_2$, a vertex-coloring $3$-edge-weighting exists \cite{karonski2004edge}. This statement is also known as $1$-$2$-$3$-conjecture. For instance, if $G$ is a cycle of length not divisible by $4$, no $2$-edge-weighting is vertex-coloring, thus $k=3$ is best possible in general.

Addario-Berry, Dalal, McDiarmid, Reed, and Thomason proved the first general upper bound of $k=30$ \cite{addarioberry2007vertexcolouring}. The bound was improved to $k=16$ by Addario-Berry, Dalal, and Reed \cite{addarioberry2008degree}, to $k=13$ by Wang and Yu \cite{wang2008onvertex}, and then, in a significant improvement, to $k=5$ by Kalkowski, Karo\'{n}ski, and Pfender \cite{kalkowski2010vertexcoloring} in 2010. For a random graph $\Gnp$, asymptotically almost surely there exists a vertex-coloring $2$-edge-weighting \cite{addarioberry2008degree}. For $d$-regular graphs, an upper bound of $k=4$ holds in general \cite{przybylo2021theconjecture, bensmail2018result} and the conjecture (i.e., an upper bound of $k=3$) is confirmed for $d>10^8$ \cite{przybylo2021theconjecture}. Recently, Przyby{\l}o verified the conjecture for graphs where the minimum degree is sufficiently large compared to the maximum degree \cite{przybylo2020conjecture}. Furthermore, the conjecture was confirmed for $3$-colorable graphs \cite{karonski2004edge} and for dense graphs \cite{zhong2018theconjecture}. Moreover, Dudek and Wajc proved that it is NP-complete to decide whether a given graph $G$ admits a vertex-coloring $2$-edge-weighting \cite{dudek2011complexity}. For an early overview on the conjecture and on related problems, we refer to the survey of Seamone \cite{seamone2012theconjecture}.

The contribution of this note is an improved general upper bound of $k=4$ with the following result.

\begin{theorem}\label{thm:main}Let $G=(V,E)$ be a graph without connected component isomorphic to $K_2$. Then there exists an edge-weighting $\omega:E \rightarrow \{1,2,3,4\}$ such that for any two neighbors $u$ and $v$,
$$\sum_{w \in N(u)}\omega(\{u,w\}) \neq \sum_{w \in N(v)}\omega(\{v,w\}).$$
\end{theorem}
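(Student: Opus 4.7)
The approach would build on the machinery of Kalkowski, Karo\'{n}ski, and Pfender \cite{kalkowski2010vertexcoloring}, who proved the analogous statement for the palette $\{1,2,3,4,5\}$. First I would perform standard reductions: since isolated vertices cause no problem, we may take $G$ to be connected with at least three vertices; if $\chi(G)\le 3$ the result already follows with palette $\{1,2,3\}$ from \cite{karonski2004edge}, so we may further assume $\chi(G)\ge 4$, which in particular guarantees a short odd cycle in $G$.

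The main construction fixes a spanning tree $T$ of $G$ rooted at a vertex $r$ lying on an odd cycle, and lists the vertices as $v_1=r,v_2,\ldots,v_n$ in a BFS order so that each non-root $v_i$ has a unique parent $p(v_i)\in\{v_1,\ldots,v_{i-1}\}$. I would initialize every edge to weight $2$ and then iterate over $i=2,\ldots,n$, at each step possibly shifting each forward edge $\{v_i,v_j\}$ with $j>i$ by $\pm 1$ (staying in $\{1,2,3,4\}$) and then finalizing the weight of the tree edge $t(v_i)=\{v_i,p(v_i)\}$ to a value in $\{1,2,3,4\}$. The inductive invariant is that at the end of step $i$, the finalized weighted degree $s(v_i)$ differs from $s(v_j)$ for every neighbor $v_j$ with $j<i$, and each forward edge of $v_i$ retains at least two legal values that later steps at $v_j$ may still use.

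The core step is to show that, at every $v_i$, the number of $s(v_i)$-values reachable by the allowed modifications strictly exceeds $|\{s(v_j):v_j\sim v_i,\,j<i\}|$. This amounts to a parity and counting argument: the $\pm 1$ shifts on the forward edges generate an interval of values of length about $d^+(v_i)+1$, while the four choices for the tree edge add a shift of up to $3$, so unless every forward edge has been pushed to an extreme by the two-value invariant the palette suffices. The tight cases are the leaves of $T$ (no forward flexibility) and the root $r$; the leaf case is absorbed by the four choices of tree-edge weight combined with the two-value flexibility inherited from its parent, while the root is handled at the end by transporting a compensating shift around the short odd cycle furnished by $\chi(G)\ge 4$.

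The main obstacle I expect is verifying that the two-value flexibility is preserved throughout the sweep, especially ensuring that absorbing a conflict at $v_i$ does not consume both degrees of freedom of some forward edge that a later $v_j$ still needs in order to dodge its own forbidden set. Threading this bookkeeping through all vertices simultaneously is, I believe, the technical crux separating the palette $\{1,2,3,4\}$ from $\{1,2,3,4,5\}$: the extra integer in the KKP proof is spent on a one-off slack at each vertex, and recovering it demands a sharper, globally coordinated invariant.
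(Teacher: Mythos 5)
Your proposal is a plan rather than a proof, and its central step is exactly the point you leave open. The whole weight of the argument rests on the claim that at each vertex $v_i$ the number of reachable values of $s(v_i)$ strictly exceeds the number of forbidden values coming from earlier neighbours, \emph{while simultaneously} preserving two degrees of freedom on every forward edge. You acknowledge in your last paragraph that you do not know how to maintain this invariant, and that the fifth weight in Kalkowski--Karo\'nski--Pfender is precisely the slack that makes their version of the count close. Identifying where the KKP argument becomes tight is not the same as supplying the ``sharper, globally coordinated invariant'' that would replace it; without that, the counting at a vertex with many back-neighbours and few forward edges (your own ``leaf of $T$'' case) simply fails: a BFS-leaf can have $\deg(v_i)-1$ back edges and hence up to $\deg(v_i)-1$ forbidden values, against which four choices of tree-edge weight plus inherited two-value flexibility do not obviously suffice. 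There are also smaller inaccuracies ($\chi(G)\ge 4$ gives an odd cycle but not a \emph{short} one, and the reduction to $\chi(G)\ge 4$ is never used in a way that would rescue the count), but the missing invariant is the fatal gap.

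For contrast, the paper does not patch the KKP sweep at all. It fixes a non-cut vertex $v_0$, takes a maximum cut $(S,T)$ of $G-v_0$, and first arranges (using weights $2$ and $3$ on a spanning tree of the cut-subgraph) that vertices in $S$ get even weighted degree and vertices in $T$ odd, so only same-side conflicts remain. Each vertex then needs its degree raised by a prescribed even amount $2k_i$, and this is achieved globally by routing $|F|$ edge-disjoint paths through the cut edges: a max-flow/min-cut argument (Lemma~\ref{lemma:flow}) shows the required flow exists \emph{because} the cut is maximum, and alternately incrementing and decrementing weights along each path shifts only the endpoints' degrees by $1$, which combined with a final $+1$ on the edges of $F$ yields the even shifts. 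The vertex $v_0$, whose parity cannot be controlled by the cut, is repaired separately by Lemma~\ref{lemma:singlevertex}. None of this resembles your vertex-by-vertex sweep, and the global flow argument is exactly the device that avoids the local counting problem you could not resolve.
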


\section{Proof}
We start by introducing notation, giving a high-level overview of the proof, and collecting two auxiliary results. Let $G=(V,E)$ be a graph and let $C=(S,T)$ be a cut. We denote by $E(S)$ the edge set of the induced subgraph $G[S]$ and by $E(S,T)$ the subset of edges having an endpoint in both $S$ and $T$ (the cut edges of $C$). For a vertex $v \in V$, we denote by $N(v)$ its neighborhood.

We will start the proof by identifying a vertex $v_0$ that is handled separately. Next, we will take a maximum cut $C=(S,T)$ of $G[V \setminus \{v_0\}]$ and construct an edge-weighting such that the weighted degree of a vertex $v \neq v_0$ is even if $v \in S$ and odd if $v \in T$. There may still be conflicts, that is, neighboring vertices with the same weighted degree. The main idea to solve these conflicts is to modify the edge-weighting along sufficiently many edge-disjoint paths that can be found with the subsequent lemma.

\begin{lemma}\label{lemma:flow}
Let $G=(V,E)$ be a graph and let $C=(S,T)$ be a maximum cut of $G$. Furthermore, let $F\subseteq E(S) \cup E(T)$  and let $\sigma$ be an orientation of the edge set $F$. Let $G_{C, F, \sigma}$ be the auxiliary directed multigraph network constructed as follows.
\begin{enumerate}[(i)]
\item As vertex set, take $V$, and add a source node $s$ and a sink node $t$.
\item For each edge $\{u,v\} \in E(S,T)$, insert the two arcs $(u,v)$ and $(v,u)$, both with capacity $1$.
\item For each edge $\{u,v\} \in F$ with corresponding orientation $(u,v) \in \sigma$, insert arcs $(s, u)$ and $(v, t)$, both with capacity $1$, potentially creating multi-arcs. Do not insert $(u,v)$.
\end{enumerate}
Then in the network $G_{C, F, \sigma}$, there exists an $s$-$t$-flow of size $|F|$.
\end{lemma}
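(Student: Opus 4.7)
My plan is to invoke max-flow min-cut. The total capacity of arcs leaving $s$ in $G_{C,F,\sigma}$ equals $|F|$ (there is exactly one arc $(s,u)$, of capacity one, per oriented edge $(u,v)\in\sigma$), so any $s$-$t$-flow has value at most $|F|$; by max-flow min-cut it suffices to show that every $s$-$t$-cut has capacity at least $|F|$.

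Fix an arbitrary cut $(X,Y)$ with $s\in X$, $t\in Y$. Put $A:=X\cap V$, $B:=Y\cap V$, and decompose $A_S:=A\cap S$, $A_T:=A\cap T$, $B_S:=B\cap S$, $B_T:=B\cap T$. I then compute the cut capacity by classifying arcs: each pair of bidirected arcs arising from $\{x,y\}\in E(S,T)$ contributes $1$ iff $\{x,y\}$ is split by $(A,B)$, and each oriented edge $(u,v)\in\sigma$ contributes $[u\in B]+[v\in A]\in\{0,1,2\}$ through the arcs $(s,u)$ and $(v,t)$. Writing $F_{AB}$ and $F_{BA}$ for the sets of $F$-edges oriented from $A$ to $B$ and from $B$ to $A$, a short calculation gives cut capacity $=|F|-|F_{AB}|+|F_{BA}|+|E(S,T)\cap E(A,B)|$, so the required inequality reduces to
\[
|F_{AB}| \le |F_{BA}| + |E(S,T)\cap E(A,B)|.
\]

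Since every edge of $F$ lies inside $S$ or inside $T$, the edges of $F$ with one endpoint in $A$ and one in $B$ lie in $E(A_S,B_S)\cup E(A_T,B_T)$, so $|F_{AB}|+|F_{BA}|\le |E(A_S,B_S)|+|E(A_T,B_T)|$; and $|E(S,T)\cap E(A,B)|=|E(A_S,B_T)|+|E(A_T,B_S)|$. Hence the reduction is complete once I prove the purely structural inequality
\[
|E(A_S,B_S)|+|E(A_T,B_T)| \le |E(A_S,B_T)|+|E(A_T,B_S)|.
\]
This is the heart of the argument and the only place where the max-cut hypothesis enters, so I expect it to be the main obstacle. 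The idea is to compare $(S,T)$ with the bipartition obtained by moving every vertex of $A$ to the opposite side, namely $(B_S\cup A_T,\,A_S\cup B_T)$: expanding the size of this flipped cut, expanding $|E(S,T)|$ in the same four-block decomposition, and cancelling the common terms $|E(A_S,A_T)|$ and $|E(B_S,B_T)|$, the hypothesis that $(S,T)$ is a maximum cut yields exactly the displayed bound. Combining the pieces gives cut capacity $\ge |F|$ for every $s$-$t$-cut, so max-flow min-cut furnishes the desired flow of size $|F|$.
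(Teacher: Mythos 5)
Your proof is correct and follows essentially the same route as the paper: both arguments reduce, via max-flow min-cut, to comparing $(S,T)$ with the flipped cut $\bigl((S\cap B)\cup(T\cap A),\,(S\cap A)\cup(T\cap B)\bigr)$ and invoking the maximality of $C$ to obtain exactly your structural inequality. The only cosmetic differences are that you argue directly (every $s$-$t$-cut has capacity at least $|F|$) and compute the cut capacity exactly, whereas the paper argues by contradiction and only lower-bounds the capacity of a hypothetical cut of size less than $|F|$.
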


The cardinalities of $S$ and $T$ will determine the parity of the weighted degree of the remaining vertex $v_0$. It remains to ensure that the edge-weighting also properly colors $v_0$, which will be done by applying the following lemma to the induced subgraph $G[N(v_0) \cup \{v_0\}]$ and using especially property (iv) of its statement.

\begin{lemma}\label{lemma:singlevertex}
Let $G=(V,E)$ be a graph with $|V| \ge 3$, let $v_0 \in V$ such that $\deg(v_0) = |V|-1$, and let $g: V \rightarrow \NN_0$ such that for each edge $\{u,v\} \in E(N(v_0))$, it holds $g(u) \neq g(v)$. Then there exists a function $h: E \rightarrow \{0, 1, 2\}$ such that
\begin{enumerate}[(i)]
\item $h(\{u,v\}) \in \{0,1\}$, whenever $v_0 \notin \{u,v\}$ and $g(u)+ g(v)$ is even,
\item $h(\{u,v\})=0$, whenever  $v_0 \notin \{u,v\}$ and $g(u) + g(v)$ is odd,
\item $s_h(v) := \sum_{w \in N(v)} h(\{v,w\}) \in \{0,2\}$ for all $v \in N(v_0)$, and
\item $g(u)+s_h(u) \neq g(v) + s_h(v)$ for each edge $\{u,v\} \in E$.
\end{enumerate}
\end{lemma}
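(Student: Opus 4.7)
I plan to construct $h$ via the following ansatz. Choose a subset $Y \subseteq N(v_0)$ (the vertices to be assigned $s_h(v) = 2$) and an auxiliary max-degree-$2$ subgraph $H$ of $G[Y]$ consisting only of same-parity edges. Set $h(e) = 1$ for $e \in H$; $h(e) = 0$ for every other edge of $E(N(v_0))$; $h(\{v_0, v\}) = 2 - \deg_H(v)$ for $v \in Y$; and $h(\{v_0, v\}) = 0$ for $v \in N(v_0) \setminus Y$. Conditions (i) and (ii) hold immediately, and one checks $s_h(v) = \deg_H(v) + (2 - \deg_H(v)) = 2$ for $v \in Y$ while $s_h(v) = 0$ for $v \in N(v_0) \setminus Y$, so (iii) holds; moreover $s_h(v_0) = 2|Y| - 2|E(H)|$.

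For (iv) restricted to edges in $E(N(v_0))$: endpoints on the same side of the partition $(Y, N(v_0) \setminus Y)$ share the same $s_h$-value but carry distinct $g$-values by hypothesis, so they cause no conflict. A conflict between $u \in Y$ and $w \in N(v_0) \setminus Y$ can arise only when $u,w$ have the same parity and $g(w) = g(u) + 2$. I would rule this out by choosing $Y$ as a prefix of the vertices of $N(v_0)$ sorted in decreasing order of $g$-value (with ties broken arbitrarily): then if $u \in Y$ and an adjacent same-parity vertex $w$ satisfies $g(w) > g(u)$, the vertex $w$ precedes $u$ in the ordering and must also lie in $Y$. Crucially, this family of prefixes provides a valid upper-closed set $Y$ for every prescribed size $n_1 \in \{0,1,\ldots,k\}$, where $k := |V|-1 \geq 2$.

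It remains to pick $n_1$ and $H$ so that (iv) holds for the edges at $v_0$, i.e.\ the value $s_h(v_0) = 2(n_1 - |E(H)|)$ avoids the set $B := \{g(v) + 2 - g(v_0) : v \in Y\} \cup \{g(v) - g(v_0) : v \in N(v_0) \setminus Y\}$, which has at most $k$ elements. This combinatorial step is the main obstacle I anticipate. Each neighbor $v$ of $v_0$ with $g(v)-g(v_0)$ even can forbid at most two specific even values of $s_h(v_0)$, and I would analyze which pair $(n_1,|E(H)|)$ avoids all of them. The key leverage is the flexibility provided by $|E(H)|$: whenever the trivial choice $H = \emptyset$ fails for every $n_1$, the obstruction is a dense cluster of same-parity close pairs of neighbors (those contributing two bad values at once), and these very pairs furnish the edges of $H$ needed to shift $s_h(v_0)$ onto a safe value. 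A careful case analysis, using $k \geq 2$ to guarantee the necessary slack, then completes the proof.
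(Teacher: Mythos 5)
Your ansatz is essentially a reparametrization of the paper's construction (choose an "upper-closed'' set $Y$ of neighbors to receive $s_h=2$, then tune $s_h(v_0)=2|Y|-2|E(H)|$), and the reduction of (iv) to avoiding the bad set $B$ is correct. But the proof is incomplete exactly where you say it is, and the mechanism you propose for closing it does not work in general. The hard case is when the same-parity neighbors realize every offset $g(v)-g(v_0)\in\{0,2,\dots,2(m'-1)\}$ with $m'$ odd: one checks that for a prefix $Y$ of size $n_1$ the unique even value missing from $B$ is $2(m'-n_1)$, so $H=\emptyset$ forces $n_1=m'-n_1$, impossible for odd $m'$, and you need $|E(H)|=1$ with $n_1=(m'+1)/2$. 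Your claim that the obstructing "close pairs furnish the edges of $H$'' is false: $H$ must consist of actual edges of $G[Y]$, and the lemma's hypothesis (that $g$ properly colors $G[N(v_0)]$) places no lower bound on which edges exist. Concretely, let $G$ be a star with center $v_0$ and three leaves with $g$-offsets $0,2,4$; then $E(N(v_0))=\emptyset$, so $H=\emptyset$ is forced, every prefix $Y$ yields $s_h(v_0)=2n_1\in B$, and your construction fails outright.

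The repair requires abandoning the prefix structure in precisely this situation: when the two critical vertices are \emph{not} adjacent, you may put one of them into $Y$ and leave the other out, accepting that they receive equal values $g+s_h$, which is harmless for (iv) because they share no edge. This is what the paper does in its "$m'$ odd'' subcase (taking $z=(m'+3)/2$ and splitting into the subcases $\{v_{z-1},v_z\}\in E$, where the triangle $\{v_0,v_{z-1},v_z\}$ plays the role of your single $H$-edge, versus $\{v_{z-1},v_z\}\notin E$, where a non-prefix $Y$ is used). Until you add that non-adjacency dichotomy and carry out the case analysis for the remaining configurations (in particular when $g(v_0)$ itself collides with some $g(v_i)$ and when $m'\le 1$), the argument has a genuine gap.
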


We now start proving the theorem. The proofs of Lemma~\ref{lemma:flow} and Lemma~\ref{lemma:singlevertex} are deferred to the end of the paper.

\begin{proof}[Proof of Theorem~\ref{thm:main}]Assume w.l.o.g.\ that $G$ is connected and has at least three vertices. We give each edge $e$ the provisorial weight $\mu(e)=2$, which will be  modified later on. Denote by $s_{\mu}(v):=\sum_{w \in N(v)}\mu(\{v,w\})$ the weighted degree of a vertex $v \in V$ under $\mu$. Let $v_0$ be a vertex which is not an articulation node (for instance, take a leaf of a spanning tree), so that the reduced graph $H := G[V \setminus \{v_0\}]$ is still connected. Next, take a maximum cut $(S, T)$ of $H$. Let $G(S,T)$ be the bipartite subgraph with vertex set $V(H)=V \setminus \{v_0\}$ and edge set $E(S,T)$. Observe that $G(S,T)$ is connected due to the maximality of the cut. 

Let $r \in N(v_0)$ and take a spanning tree $T'$ of $G(S,T)$ rooted at $r$. For each node $v \neq r$ in the tree, denote by $par(v)$ its parent in $T'$. We are going to modify $\mu$ on the edges of $T'$ such that $s_{\mu}(v)$ is even if $v \in S$ and odd if $v \in T$. We start with the leafs. For each leaf $\ell$, put $\mu(\{\ell, par(\ell)\}):=3$ if $\ell \in T$. If $\ell \in S$, we keep $\mu(\{\ell, par(\ell)\}) = 2$. Then indeed, $s_{\mu}(\ell)$ is even if and only if $\ell \in S$.

Afterwards, we iterate the idea level by level towards root $r$, processing each internal node only after all its child nodes have been handled: We assign to each tree-edge $\{v, par(v)\}$ weight either $2$ or $3$ such that the parity modulo $2$ of $s_{\mu}(v)$ becomes correct. Finally, we assign to the edge $e_0 = \{v_0, r\}$ weight either $1$ or $2$ such that the parity of $s_{\mu}(r)$ becomes correct as well.

So far, neighboring vertices on different sides of the cut $C$ receive different weighted degrees. We need to ensure that the same happens for neighbors on the \emph{same} side of $C$ as well, and we should not forget $v_0$. The plan is to give each vertex $v \in V$ a designated ``color'' $f(v)$ such that neighboring vertices always receive different colors. Afterwards, from $\mu$ we construct a new edge-weighting $\omega$ such that under $\omega$, indeed each vertex $v$ obtains weighted degree $f(v)$. We start with $v_0$ and its neighborhood. Let $N(v_0)=\{v_1, \ldots, v_m\} \subseteq V(H)$, with arbitrary order. We assign to each $v_i \in N(v_0)$ values $k(v_i)$ and $g(v_i)$ as follows. We start with $k(v_1):=0$ and $g(v_1):=s_{\mu}(v_1)$. For $i > 1$, choose $k(v_i) \in \NN_0$ minimal such that $g(v_i):= s_{\mu}(v_i)+2k(v_i)$ is different from $g(v_j)$ for all $j<i$ with $\{v_i, v_j\} \in E$. If $v_i$ has no such neighbors, use $k(v_i):=0$ and $g(v_i):=s_{\mu}(v_i)$. A vertex $v_i \in S$ thus has at least $k_i$ neighbors in $S$ with smaller index (and the same is obviously true for $T$). For the sake of completeness, set $g(v_0):=s_{\mu}(v_0)$. 

Assume first that $\deg(v_0)>1$. We apply Lemma~\ref{lemma:singlevertex} to the induced subgraph $G[N(v_0) \cup \{v_0\}]$, which is possible since $g$ as defined above indeed satisfies the precondition. The lemma yields a function $h: E(N(v_0) \cup \{v_0\}) \rightarrow \{0,1,2\}$, which we use as follows.

First, we consider the edge weighting. So far, edges incident to $v_0$ have weight either $1$ or $2$. All other edges have weight $2$, except some cut edges $e \in E(S,T)$ with $\mu(e)=3$. For each edge $e \in E(N(v_0) \cup \{v_0\})$, we set $\omega(e) := \mu(e) + h(e)$. For all other edges, we put $\omega(e) := \mu(e)$. Then, edges $e$ incident to $v_0$ satisfy $\omega(e) \le 2 + h(e) \le 4$. Regarding the cut edges, recall that for $u \in S$ and $v \in T$, $s_{\mu}(u)+s_{\mu}(v)$ is odd. Hence, by property (ii) of Lemma~\ref{lemma:singlevertex}, $h$ vanishes on cut edges, implying that we still have $\omega(e)=\mu(e) \in \{2,3\}$ if $e$ is a cut edge of $C$. Finally, if $e \in E(S) \cup E(T)$, then $h(e) \in \{0,1\}$ by (i), implying $\omega(e) \in \{2,3\}$ as well. Thus, on edges not incident to $v_0$, we can further increase or decrease the weighting $\omega$ by $1$ later on.

Second, we assign to each node $v \in N(v_0) \cup \{v_0\}$ the designated color $f(v) := g(v)+s_h(v)$. By property (iii) of Lemma~\ref{lemma:singlevertex}, we preserve parities, i.e., $f$ is even-valued on $S$ and odd-valued on $T$. By (iv), indeed neighboring nodes receive different designated colors. Furthermore, $f(v_0)$ already coincides with $s_{\omega}(v_0)$, and for all $v \in N(v_0)$, we have 
$$f(v)-s_{\omega}(v)=g(v)+s_h(v)-s_{\omega}(v)=2k(v).$$

In the special case $\deg(v_0)=1$, $r=v_1$ is the only neighbor of $v_0$. Here, we directly put $\omega \equiv \mu$, and then set $f(v_0):=s_{\omega}(v_0)=\mu(e_0)$ and $f(v_1):=s_{\omega}(v_1)$. Since $|V|\ge 3$ and $G$ is connected, $v_1$ has at least one other incident edge in addition to $e_0$. Therefore, $s_{\omega}(v_1) > s_{\omega}(v_0)$ and $f(v_1)>f(v_0)$. Clearly, for each edge $e \neq e_0$ we again have $\omega(e) \in \{2,3\}$.

We now turn to the remaining set of vertices $V \setminus (N(v_0) \cup \{v_0\}) := \{v_{m+1}, \ldots, v_{n-1}\}$, which didn't yet receive a designated color. Similarly as before, for each $v_i$, put $f(v_i)=s_{\omega}(v_i)+2k_i$, where $k_i \ge 0$ is the minimal integer such that $f(v_i)$ differs from all $f(v_j)$ for all its neighbors $v_j$ with $1 \le j < i$. Hence, for each vertex $v_i \neq v_0$, we ensured $f(v_i)-s_{\omega}(v_i)=2k(v_i)$. Moreover, any two neighbors of the graph already have different designated colors. For later reference, denote by $t(v_i):=f(v_i)-2k_i$ the current weighted degree of $v_i$ under $\omega$. 

To actually achieve the desired colors, the weighted degree $s_{\omega}(v_i)$ of each node $v_i$ should further increase by exactly $2k_i$. In order to solve this task, we construct a subset $F \subseteq E[S] \cup E[T]$ and an orientation $\sigma$ of $F$ as follows. For each vertex $v_i \in S$, choose $k_i$ neighbors $v_j \in S$ with smaller index (i.e., $1 \le j<i$), add $\{v_i, v_j\}$ to $F$, and add the orientation $(v_i, v_j)$ to $\sigma$. For each vertex $v_i \in T$, choose $k_i$ neighbors $v_j \in T$ with $1 \le j<i$, add $\{v_i, v_j\}$ to $F$, but add orientation $(v_j, v_i)$ to $\sigma$ (mind the asymmetry compared to side $S$!).

Then by applying Lemma~\ref{lemma:flow} to the reduced graph $H$, there is an $s$-$t$-flow of size $|F|$ in the auxiliary multigraph $H_{C,F,\sigma}$. As all edges have capacity $1$, there are $f$ edge-disjoint $s$-$t$-paths in $H_{C,F,\sigma}$. Consider such a path $p=(s, u_1, \ldots, u_m, t)$, and let $p' = \{u_1, \ldots, u_m\}$ be its induced, undirected subpath in the bipartite graph $G[S,T]$. Unless $u_1 = u_m$ (which happens when $p'$ is an empty path), we modify the weighting $\omega$ of each edge $\{u_i, u_{i+1}\} \in p'$ as follows: increase its weight by $1$ if $u_i \in S$, and decrease the weight by $1$ if $u_i \in T$. In other words, we alternately increase or decrease the edge weights along the path. The weighted degrees of the internal nodes $u_2, \ldots, u_{m-1}$ thereby do not change, in contrast to those of the endpoints $u_1$ and $u_m$. The weighted degree of $u_1$ increases by $1$, if $u_1 \in S$, and decreases by $1$, if $u_1 \in T$. Regarding $u_m$, its weighted degree increases by $1$, if $u_m \in T$, and decreases by $1$, if $u_m \in S$. When $u_1=u_m$, there is no change on the weighted degree of this node.

By construction of $H_{C,F,\sigma}$, each edge of $F$ led to exactly one arc incident to $s$ and one arc incident to $t$. Thus, for each path $p=(s,u_1, \ldots, u_m, t)$ of the provided edge-disjoint $s$-$t$-paths, there are two uniquely identified $F$-incidences: an edge $f^+ = \{u_1, w_1\} \in F$ with $(u_1, w_1) \in \sigma$, leading to the arc $(s,u_1) \in p$, and an edge $f^-=\{w_m,u_m\} \in F$ with $(w_m, u_m) \in \sigma$, leading to the arc $(u_m, t) \in p$. Note that $f^+=f^-=\{u_1,u_m\}$ is possible. Vice versa, as we found $|F|$ edge-disjoint $s$-$t$-paths in the auxiliary network $H_{C,F,\sigma}$, for each $f=\{u^+,u^-\} \in F$ with $(u^+,u^-)\in\sigma$, there are uniquely identified paths starting with $(s,u^+)$ and ending with $(u^-,t)$. 

We repeat the described modification on $\omega$ for all $|F|$ paths provided by Lemma~\ref{lemma:flow}. Summing up the changes on $\omega$ caused by each path $p'$, for $v \in S$, the freshly updated edge-weighting $\omega$ satisfies
$$s_{\omega}(v)- t(v) = |\{w:(v,w) \in \sigma\}| - |\{w:(w,v) \in \sigma\}|,$$
whereas for $v \in T$, 
$$s_{\omega}(v)- t(v) = |\{w:(w,v) \in \sigma\}| - |\{w:(v,w) \in \sigma\}|.$$

Finally, as a last modification step, we increase the weighting $\omega$ on each edge in $F$ by $1$, obtaining
$$s_{\omega}(v)- t(v) = 2|\{w:(v, w) \in \sigma\}|=2k_i$$
for all $v \in S$, and 
$$s_{\omega}(v)- t(v) = 2|\{w:(w, v) \in \sigma\}|=2k_i$$
for all $v \in T$. We conclude that each vertex $v_i$ obtained weighted degree $t(v_i)+2k_i=f(v_i)$, thus indeed, the constructed edge-weighting $\omega$ gives rise to a proper vertex-coloring of $G$.
\end{proof}

\begin{proof}[Proof of Lemma~\ref{lemma:flow}]
Let $k := |F|$ and consider the network $H := G_{C, F, \sigma}$ with auxiliary vertices $s$ and $t$. Assume by contradiction that there exists no $s$-$t$-flow of value $k$ in $G_{C, F, \sigma}$. Then, by the standard \emph{max-flow min-cut theorem} \cite[e.g.]{ford2010flows}, there exists a cut $C'=(A_H, B_H)$ of size $\ell<k$, where $s \in A_H$ and $t \in B_H$. Observe that $A_G := A_H \setminus \{s\}$ and $B_G := B_H \setminus \{t\}$ are both subsets of the vertex set $V$ of the original, undirected graph $G=(V,E)$.

By step (iii) of the lemma statement, for each edge $f \in F$, in $H$ there are two uniquely defined arcs outgoing at $s$ and incoming at $t$. Let $F' := \{\{u,v\} \in F: u \in A_G, v \in B_G\}$ and remark that for each $f=\{u,v\} \in F \setminus F'$ with $(u,v) \in \sigma$, one of its two identified arcs $(s,u)$ and $(v,t)$ is in the cut $C'$. Next, let $E_1 := E(S \cap A_G, T \cap B_G) \cup E(S \cap B_G, T \cap A_G)$
and notice that for each edge $e = \{u,v\} \in E_1$, either $(u,v)$ or $(v,u)$ is contained in $C'$ as well. It follows
$$|E_1| \le \ell - |F\setminus F'| = \ell - (k-|F'|) < |F'|.$$

Finally, consider the cut 
$$C'' := ((S \cap A_G) \cup (T \cap B_G),(S \cap B_G) \cup (T \cap A_G))$$ 
of the original graph $G$. Let $E_2 := E(S \cap A_G, S \cap B_G) \cup E(T \cap A_G, T \cap B_G)$, 
and observe that $F' \subseteq E_2$. Putting everything together, we deduce
$$|C''| = |C| - |E_1|+|E_2| > |C|-|F'|+|F'|=|C|.$$
We see that in $G$, $C''$ would be a larger cut than $C$, contradicting the maximality of cut $C$. So, in $H$ there exists an $s$-$t$-flow of value $k$.
\end{proof}

\begin{proof}[Proof of Lemma~\ref{lemma:singlevertex}]In order to prove the statement, we do a case analysis to find a suitable function $h:E\rightarrow \{0,1,2\}$ that satisfies (i)-(iv). Whenever such a function $h$ is constructed, for each $v \in V$, define $f_h(v) := g(v)+s_h(v)$. Then (iv) is fulfilled if and only if $f_h$ is a proper vertex coloring of $G$.

We start with the case $g(v_0) \neq g(v)$ for all vertices $v \neq v_0$, where we can set $h \equiv 0$. Then (i)-(iv) are all met. Otherwise, let 
$$V' := \{v \neq v_0: g(v) - g(v_0) \equiv 0 \text{ mod } 2\}=\{v_1, \ldots, v_{m'}\},$$ 
and assume w.l.o.g.\ that $g(v_1) \le \ldots \le g(v_{m'})$. Suppose first that $m'=|V'|=1$. Because we already handled the case $g(v_0) \neq g(v_1)$, we can assume $g(v_0)=g(v_1)$. Take $u \in  N(v_0) \setminus V'$ such that $g(u)$ is maximal. Put $h(\{v_0,u\}) := 2$ and $h(e):=0$ for any other edge $e \neq \{v_0,u\}$. We obtain $f_h(v_1)< f_h(v_0)$. Since for any $w \in V \setminus \{u, v_0, v_1\}$, it holds
$$f_h(u) = g(u)+2 \ge g(w)+2 > g(w)= f_h(w),$$ 
(iv) is achieved. The other properties clearly hold as well.

For the remaining proof, we can assume $m' \ge 2$. For all $j \in \{0, \ldots, m'\}$, define the function 
$$
h_j: E \rightarrow \NN_0,\hspace{0.2cm} e \mapsto
\begin{cases}
2, & \text{if } e=\{v_0,v_i\} \text{ for some } j<i \le m',\\
0, & \text{otherwise}.
\end{cases}
$$
The plan is now to find a suitable function $h_j$ for as many cases as possible. Clearly, each $h_j$ directly fulfills (i)-(iii). Regarding (iv), we claim that it is sufficient to verify the property only for edges incident to $v_0$. Indeed, by construction, on $V'$ $g(u)>g(v)$ directly implies $f_{h_j}(u) > f_{h_j}(v)$. Hence, $f_{h_j}$ would already properly color $V' \cup \{v_0\}$. But $f_{h_j}$ also preservers the parities mod $2$ of $g$, and $h_j$ vanishes on $E \setminus E(V' \cup \{v_0\})$, so the proper coloring of the remaining vertices is inherited from $g$.

Let $x \ge 1$ be the smallest integer such that $g(v_0)+2x$ differs from all $g(v_1), \ldots, g(v_{m'})$, and let $i' \le m'$ be maximal such that $g(v_{i'}) < g(v_0)+2x$. Because $g(v_0)=g(v_i)$ for some $v_i$, $i'$ is well-defined. First consider the case $i' \le m'-x$. Here, we set $h \equiv h_{m'-x}$. Then, for $i >m'-x \ge i'$, we have $f_h(v_i) >g(v_i) \ge g(v_0)+2x$, whereas for $i \le m'-x$, it holds $f_h(v_i)=g(v_i) \neq g(v_0)+2x$. Hence indeed, $f_h(v_0) = g(v_0)+2x$ is different from $f_h(v_1), \ldots, f_h(v_m)$.

Next, consider the case $i' > m'-x$ and $x<m'$, where we put $h \equiv h_{m'-x-1}$. For $i \ge m'-x$ we then have $f_h(v_i)=g(v_i)+2 \neq g(v_0)+2(x+1)$, whereas for all $i < m'-x$ it holds $$f_h(v_i)=g(v_i) \le g(v_{i'}) < g(v_0)+2x.$$ 
Thus, for all $v_i \in V'$, it holds $f_h(v_0) = g(v_0)+2(x+1)\neq f_h(v_i)$, and (iv) is again fulfilled.

It remains the case $x=m'$. Here, for each $0\le y < m'$, the value $g(v_0)+2y$ is attained by one $g(v_i)$. So we have $g(v_i)=g(v_0)+2(i-1)$ for all $v_i \in V'$. We distinguish two subcases. If $m'$ is even, we can use $h \equiv h_{m'/2}$. For $i \le m'/2$, it then holds $f_h(v_i)=g(v_i) \le g(v_0)+m'-2$, whereas, for $i > m'/2$, $f_h(v_i)=g(v_i)+2 \ge g(v_0)+m'+2$. Since $f_h(v_0)=g(v_0)+m'$, (iv) is again achieved. 

On the other hand, if $m'$ is odd, then $m' \ge 3$. This situation is a bit inconvenient, because none of the functions $h_j$ can be used. Instead, let $z := \tfrac{m'+3}{2} \le m'$. Put $h(\{v_0, v_i\}) := 2$ for all $i > z$, and $h(\{v_0, v_i\}):=0$ for all $i < z-1$. Moreover, let $h(e):=0$ whenever $e \notin E(V' \cup \{v_0\})$, so (ii) is already satisfied. We want to achieve $s_h(v_0)=m'-1$, but need to be careful to satisfy (iii) and (iv) at the same time. If $v_z$ and $v_{z-1}$ do not share an edge, put $h(\{v_0, v_{z-1}\}):=2$, $h(\{v_0, v_z\}):=0$, and $h(e):=0$ for all $e \in E(N(v_0))$. Then 
$$f_h(v_z)=f_h(v_{z-1})=g(v_0)+2(z-1)=g(v_0)+m'+1,$$ 
which is fine regarding (iv), as the two nodes are not neighbors. 

Vice versa, if the edge $e' := \{v_z, v_{z-1}\}$ is present in $E$, for each edge $f$ of the triangle $\{v_0, v_{z-1},v_z\}$, put $h(f):=1$. For all edges $e \in E(V') \setminus \{e'\}$, set $h(e):=0$, yielding 
$$f_h(v_z)=g(v_0)+2z = g(v_0)+m'+3$$ 
and 
$$f_h(v_{z-1})=g(v_0)+2(z-1)=g(v_0)+m'+1.$$ 
In both subcases, $f_h(v_0)=g(v_0)+m'-1$ by construction. Moreover, for each $v_i$ with $i \ge z+1$, it holds 
$$f_h(v_i) = g(v_i)+2 = g(v_0)+2i \ge g(v_0)+m'+5,$$ 
whereas for $i < z-1$, it holds
$$f_h(v_i)=g(v_i) =g(v_0)+2(i-1) \le g(v_0)+2(z-3) =g(v_0)+m'-3.$$
We conclude that $f_h$ properly colors $V' \cup \{v_0\}$. Properties (i)-(iii) are clearly achieved with $h$ in both subcases. By the same argument as above for the functions $h_j$, $f_h$ then properly colors the entire set $V$.
\end{proof}

\bibliographystyle{acm}
\bibliography{references}

\end{document}